\documentclass[12pt]{article}
\usepackage[left=2.5cm,right=2.5cm,top=2.5cm,bottom=2.5cm,a4paper]{geometry}

\usepackage[a4paper]{geometry}
\usepackage{graphicx}
\usepackage{microtype}
\usepackage{siunitx}
\usepackage{booktabs}
\usepackage{graphics}
\usepackage{graphicx}
\usepackage{epsfig}
\usepackage{amsmath,amsfonts,amssymb,amsthm}
\usepackage{cleveref}
\usepackage{listings}
\usepackage{paralist}
\usepackage{sectsty}
\usepackage{datetime}
\usepackage{algorithmic,algorithm}
\usepackage{multirow}

\usepackage[X2,T1]{fontenc} 

\usepackage{authblk}

\author[1,\thanks{khsong@jejunu.ac.kr}]{Kyunghwan Song}
\affil[1]{Department of Mathematics, Jeju National University, 102 Jejudaehakro Jeju, 63243, Republic of Korea}

\date{}

\DeclareTextSymbolDefault{\CYRABHDZE}{X2}
\DeclareTextSymbolDefault{\cyrabhdze}{X2}

\numberwithin{equation}{section}
\subsectionfont{\normalfont}

\newtheorem{theorem}{Theorem}[section]
\newtheorem{definition}[theorem]{Definition}
\newtheorem{lemma}[theorem]{Lemma}

\newtheorem{remark}[theorem]{Remark}

\newtheorem{proposition}[theorem]{Proposition}
\newtheorem{conjecture}[theorem]{Conjecture}

\newcommand{\be}{\begin{equation}}
	\newcommand{\ee}{\end{equation}}
\newcommand{\bee}{\begin{equation*}}
	\newcommand{\eee}{\end{equation*}}

\usepackage{color}

\title{
The Frobenius problem for shifted square sequences
}

\begin{document}
	
	\maketitle

\begin{abstract}
	The greatest integer that does not belong to a numerical semigroup $S$ is called the Frobenius number of $S$, and finding the Frobenius number is called the Frobenius problem.  In this paper,  we resolve the conjecture of Frobenius problem for shifted square sequences suggested by Liu and Xin.
\end{abstract}

\section{Introduction}\label{sec_Introduction}
To begin, we introduce a numerical semigroup and submonoid generated by a nonempty subset.
\begin{definition} 
	A {\em numerical semigroup} is a subset $S$ of $\mathbb{Z}_{\geq 0}$ that is closed under addition and contains $0$, such that $\mathbb{Z}_{\geq 0}\texttt{\char`\\}S$ is finite. 
\end{definition}
\begin{definition} \label{def_submonoid}
	Given a nonempty subset $A$ of a numerical semigroup $\mathbb{Z}_{\geq 0}$, we denote by $\big<A\big>$ \textit{the submonoid of $(\mathbb{Z}_{\geq 0},+)$ generated by $A$}, that is,
	\begin{displaymath}
		\big<A\big> = \{\lambda_1 a_1 + \cdots + \lambda_n a_n | n \in \mathbb{Z}^{+}, a_i \in A, \lambda_i \in \mathbb{Z}_{\geq 0}
	\end{displaymath}
	\textrm{ for all } $i \in \{1,\cdots,n\}\}$.
\end{definition}
In addition, we introduce a theorem directly related to the above definitions.
\begin{theorem} (\cite{Rosales2015,Rosales2009}).
	Let $\big<A\big>$ be the submonoid of $(\mathbb{Z}_{\geq 0},+)$ generated by a nonempty subset $A \subseteq \mathbb{Z}_{\geq 0}$. Then $\big<A\big>$ is a numerical semigroup if and only if $\gcd(A) = 1$.
\end{theorem}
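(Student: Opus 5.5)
We prove the two implications separately. Throughout, $\big<A\big>$ is closed under addition and contains $0$ by construction, so by the definition of a numerical semigroup the only issue is whether $\mathbb{Z}_{\geq 0}\setminus\big<A\big>$ is finite. (If $A=\{0\}$, then $\gcd(A)$ is taken to be $0$, the semigroup is $\{0\}$, and both sides fail, so we may assume $A$ contains a positive element.)

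\emph{Only if.} Put $d=\gcd(A)$. Every element $\lambda_1a_1+\cdots+\lambda_na_n$ of $\big<A\big>$ is a multiple of $d$. If $d>1$, then no integer congruent to $1$ modulo $d$ lies in $\big<A\big>$. There are infinitely many such nonnegative integers, so the complement is infinite and $\big<A\big>$ is not a numerical semigroup.

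\emph{If.} Suppose $\gcd(A)=1$.

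First, reduce to a finite set. The ideals of $\mathbb{Z}$ generated by finite subsets of $A$ form an increasing family whose union is $\gcd(A)\mathbb{Z}=\mathbb{Z}$. Since $\mathbb{Z}$ is Noetherian, some finite subset $\{a_1,\dots,a_k\}\subseteq A$ already satisfies $\gcd(a_1,\dots,a_k)=1$. Its generated submonoid is contained in $\big<A\big>$, so it suffices to treat this finite set.

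Second, apply B\'ezout. There are integers $z_i$ with $\sum z_ia_i=1$. Split the sum according to the sign of the $z_i$ to obtain elements $x,y\in\big<a_1,\dots,a_k\big>$ with $x-y=1$; in particular $x=y+1$.

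- If $y=0$, then $1\in\big<A\big>$, so $\big<A\big>=\mathbb{Z}_{\geq 0}$ and we are done.
- Otherwise $y\geq 1$. We claim every integer $n\geq (y-1)(y+1)$ lies in $\big<A\big>$. Write $n=qy+r$ with $0\le r<y$. Then
\[
n = qy + r = (q-r)y + r(y+1) = (q-r)y + rx .
\]
Since $n\geq (y-1)y+(y-1)$, we have $q\geq y-1\geq r$, so $q-r\geq 0$. Hence $n$ is a nonnegative combination of $x$ and $y$, and so $n\in\big<A\big>$.

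Therefore the complement of $\big<A\big>$ is contained in $\{0,1,\dots,y^2-2\}$ and is finite, so $\big<A\big>$ is a numerical semigroup.

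The step needing the most care is the passage from a possibly infinite $A$ to a finite generating subset with gcd $1$. Everything after that is elementary B\'ezout arithmetic.
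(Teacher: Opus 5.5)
Your proof is correct. The paper gives no proof of its own here, only the citation to Rosales et al., and your argument is essentially the standard one from the cited book: Bézout yields consecutive elements $y, y+1 \in \langle A\rangle$, and writing $n = qy+r$ with $n \geq (y-1)y+(y-1)$ forces $q \geq y-1 \geq r$, so $n = (q-r)y + r(y+1)$. Your explicit reduction to a finite subset with $\gcd 1$ and your separate treatment of $y=0$ simply fill in details that the standard write-up leaves implicit.
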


For a positive integer $a$, define 
$$
A(a) = \{a, a+1^2, a+2^2, \ldots\}.
$$
We consider the numerical semigroup generated by $A(a)$:
$$
S(A(a)) = \{\lambda_0a + \lambda_1(a + 1^2) + \lambda_2(a+2^2) + \ldots | \lambda_i \in \mathbb{Z}_{\geq 0}\textrm{ for all }i \in \mathbb{Z}_{\geq 0} \}.
$$
For a numerical semigroup $S$, the Frobenius number of $S$ is defined by
$$
F(S) = \max\left(\mathbb{Z}_{\geq 0}\backslash S\right).
$$
Determining $F(S(A(a)))$ remains an open problem in general \cite{Einstein2007,Liu2026}. 

In this paper, we resolve the conjecture proposed in \cite{Liu2026} and obtain explicit formulas for several infinite families. We mainly use the following results in \cite{Liu2026}.
\begin{definition}
	(\cite{Liu2026}) For $n \in \mathbb{Z}^{+}$, define $\iota(n)$ as
	$$
	\iota(n) = \min\{s | n = a_1^2 + a_2^2 + \cdots + a_s^2, a_i \in \mathbb{Z}^{+}, 1\leq i\leq s\}
	$$
\end{definition}
\begin{lemma}
	(\cite{Hardy2008}) (Lagrange's Four-Square Theorem). Every positive integer can be expressed as the sum of four squares. Immediately, for any $m \in \mathbb{Z}^{+}$, there exist $a,b,c,d \in \mathbb{Z}$ such that $m = a^2 + b^2 + c^2 + d^2$.
\end{lemma}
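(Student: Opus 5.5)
The statement is Lagrange's Four-Square Theorem, which the paper cites from \cite{Hardy2008}. I would follow the classical route: reduce to primes via Euler's four-square identity, and handle primes by a descent argument.

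\textbf{Step 1 (multiplicativity).} Verify Euler's identity
\begin{align*}
(a^2+b^2+c^2+d^2)(w^2+x^2+y^2+z^2) &= (aw+bx+cy+dz)^2+(ax-bw+cz-dy)^2\\
&\quad+(ay-bz-cw+dx)^2+(az+by-cx-dw)^2 ,
\end{align*}
which is the norm multiplicativity for quaternions. It shows that the set of sums of four squares is closed under multiplication. Since $1=1^2+0+0+0$ and $2=1^2+1^2+0+0$, it suffices to treat odd primes $p$.

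\textbf{Step 2 (a multiple of $p$).} For an odd prime $p$, the sets $\{x^2 : 0\le x\le (p-1)/2\}$ and $\{-1-y^2 : 0\le y\le (p-1)/2\}$ each contain $(p+1)/2$ distinct residues mod $p$. By pigeonhole they intersect, so $x^2+y^2+1 = mp$ for some $x,y$ with $|x|,|y|<p/2$. This gives $1\le m<p$, and $mp$ is a sum of four squares.

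\textbf{Step 3 (descent).} Let $m_0$ be the least positive integer with $m_0 p = a^2+b^2+c^2+d^2$. By Step 2, $m_0<p$; we show $m_0=1$.
\begin{itemize}
\item If $m_0$ is even, pair the terms by parity. Then $\bigl((a\pm b)/2\bigr)^2+\bigl((c\pm d)/2\bigr)^2$ gives $(m_0/2)p$ as a sum of four squares, contradicting minimality.
\item If $m_0>1$ is odd, take residues $w\equiv a$, $x\equiv b$, $y\equiv c$, $z\equiv d \pmod{m_0}$ in $(-m_0/2,m_0/2)$. Then $w^2+x^2+y^2+z^2 = m_0 r$ with $r<m_0$. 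Moreover $r\neq 0$, since otherwise $m_0^2 \mid m_0 p$, which forces $m_0\mid p$ and so $m_0=1$ (as $m_0<p$).
\item Multiply the two representations using Step 1, giving $m_0^2 rp$. Each of the four combined terms is divisible by $m_0$ because of the congruences. Dividing by $m_0^2$ expresses $rp$ as a sum of four squares with $r<m_0$, a contradiction.
\end{itemize}

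The main obstacle is this descent step, specifically checking that each bilinear term in Euler's identity is $\equiv 0 \pmod{m_0}$ and that $0<r<m_0$. The rest is routine bookkeeping. The ``immediately'' clause of the statement is just the integer form of the same representation, allowing zero entries.
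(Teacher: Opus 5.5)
Your proof is correct. The paper gives no argument of its own and simply cites Hardy and Wright, and the classical proof there follows your route: Euler's four-square identity to reduce to primes, the pigeonhole construction of $x^2+y^2+1\equiv 0 \pmod{p}$ with multiplier $m<p$, and Lagrange's descent on the least multiplier $m_0$, where your identity holds as stated, its four terms are $\equiv 0 \pmod{m_0}$ under $w\equiv a$, $x\equiv b$, $y\equiv c$, $z\equiv d$, and both descent cases go through.
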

\begin{lemma}
	(\cite{Hardy2008}) Let $m \in \mathbb{Z}^{+}$. Then there exist $a,b \in \mathbb{Z}$ such that $m = a^2 + b^2$ iff $m = p_1^{e_1}\cdots p_k^{e_k}$ (standard prime factorization), for any $i \in \{1,\ldots, k\}, p_i \equiv 3\pmod{4}$ implies that $2|e_i$.
\end{lemma}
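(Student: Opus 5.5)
This is the classical sum-of-two-squares theorem, stated in both directions. I will prove sufficiency and necessity separately, using only elementary number theory: Euler's identity, Fermat's two-square theorem for primes, and a descent argument modulo a prime $p\equiv 3\pmod 4$. The case $m$ being a perfect square, e.g. $m=a^2+0^2$, is allowed because $a,b\in\mathbb{Z}$.

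\emph{Sufficiency.} Assume every prime $p_i\equiv 3\pmod 4$ occurs to an even power.

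First, the Brahmagupta--Fibonacci identity
\[(a^2+b^2)(c^2+d^2)=(ac-bd)^2+(ad+bc)^2\]
shows that the set of sums of two squares is closed under multiplication. So it suffices to represent each factor separately.

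The easy factors are handled directly:
\begin{itemize}
\item $2=1^2+1^2$;
\item each $p_i^{e_i}$ with $p_i\equiv 3\pmod 4$ and $e_i$ even equals $(p_i^{e_i/2})^2+0^2$.
\end{itemize}

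The remaining factors are primes $p\equiv 1\pmod 4$, and this is Fermat's theorem. Since $-1$ is a quadratic residue mod $p$ (e.g.\ $x=((p-1)/2)!$ satisfies $x^2\equiv -1$ by Wilson's theorem), there is an $x$ with $p\mid x^2+1$. Now apply Thue's lemma, a pigeonhole argument on the pairs $u+xv$ with $0\le u,v\le\lfloor\sqrt p\rfloor$. It gives $u,v$, not both zero, with $|u|,|v|<\sqrt p$ and $u\equiv xv\pmod p$. Then $p\mid u^2+v^2$ and $0<u^2+v^2<2p$, so $u^2+v^2=p$.

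\emph{Necessity.} Suppose $m=a^2+b^2$ and $p\equiv 3\pmod 4$ divides $m$. I show $p\mid a$ and $p\mid b$.

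If $p\nmid b$, then $(ab^{-1})^2\equiv -1\pmod p$. That would mean $-1$ is a quadratic residue mod $p$, contradicting Euler's criterion, since $(-1)^{(p-1)/2}=-1$. Hence $p\mid b$, and then $p\mid a$ as well.

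It follows that $p^2\mid m$ and
\[m/p^2=(a/p)^2+(b/p)^2 .\]
Induct on the exponent of $p$: each step removes exactly $p^2$ and keeps a two-square representation. The exponent of $p$ therefore drops by $2$ each time until $p$ no longer divides the quotient, so it was even.

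\emph{Main obstacle.} The only nontrivial step is Fermat's theorem for primes $p\equiv1\pmod4$. I handle it with the Wilson/Thue pigeonhole argument above rather than with Gaussian integers, to stay elementary; everything else is routine algebra.
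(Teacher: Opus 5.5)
The paper gives no proof of this lemma; it quotes the classical result from Hardy--Wright, so there is no internal argument to compare yours against. Your proof is correct and self-contained.

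\textbf{Sufficiency.} The Brahmagupta--Fibonacci identity correctly reduces this to prime factors. Wilson's theorem gives $x^2\equiv -1 \pmod p$ for $p\equiv 1\pmod 4$, because $(p-1)/2$ is even. The pigeonhole (Thue) step works because $(\lfloor\sqrt p\rfloor+1)^2>p$, and $\lfloor\sqrt p\rfloor<\sqrt p$ for prime $p$. This forces $0<u^2+v^2<2p$, hence $u^2+v^2=p$.

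\textbf{Necessity.} The argument is complete: Euler's criterion shows $p\mid a$ and $p\mid b$, and repeatedly dividing by $p^2$ removes $p$ two powers at a time.

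Two cosmetic points:
\begin{itemize}
\item A collision $u_1+xv_1\equiv u_2+xv_2 \pmod p$ yields $u\equiv -xv$, not $u\equiv xv$. This is harmless, since only $x^2\equiv -1$ is used afterwards.
\item The case $m=1$, with empty factorization, should be covered explicitly by $1=1^2+0^2$.
\end{itemize}

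The usual alternative goes through unique factorization in $\mathbb{Z}[i]$. There, a prime $p\equiv 1\pmod 4$ is not a Gaussian prime, because $p\mid (x+i)(x-i)$ but divides neither factor. A prime $p\equiv 3\pmod 4$ stays prime in $\mathbb{Z}[i]$, so it must divide both $a$ and $b$. Both routes rest on the same fact: $-1$ is a quadratic residue mod $p$ iff $p\equiv 1\pmod 4$.

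Your version replaces the Euclidean algorithm in $\mathbb{Z}[i]$ with an elementary pigeonhole argument. The algebraic approach, in exchange, leads naturally to the representation count $r(n)=4(d_1(n)-d_3(n))$.
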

\begin{lemma}
	(\cite{Hardy2008}) Let $m \in \mathbb{Z}^{+}$. Then there exist $a,b,c \in \mathbb{Z}$ such that $m = a^2 + b^2 + c^2$ iff $m \neq 4^r(8t + 7)$ for any $r,t \in \mathbb{Z}_{\geq 0}$.
\end{lemma}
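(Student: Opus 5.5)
The statement splits into an easy direction (necessity) and a hard one (sufficiency). For necessity, squares are congruent to $0,1,4 \pmod 8$. Any sum of three of them therefore lies in $\{0,1,2,3,4,5,6\} \pmod 8$, so a number $m \equiv 7 \pmod 8$ is never of the form $a^2+b^2+c^2$.

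Next I would run a descent on the power of $4$. Suppose $4m = a^2+b^2+c^2$. Squares are $0$ or $1 \pmod 4$, so $a^2+b^2+c^2 \equiv 0 \pmod 4$ forces $a,b,c$ all even. Then $m = (a/2)^2+(b/2)^2+(c/2)^2$. Induction on $r$ shows that $4^r(8t+7)$ is a sum of three squares only if $8t+7$ is, which is impossible.

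For sufficiency, the same descent reduces to the case $4 \nmid m$. The case $m \equiv 0 \pmod 4$ with $m/4$ admissible follows by multiplying a representation by $2$. So it suffices to treat $m \equiv 1,2,3,5,6 \pmod 8$, and I would follow the classical route (Gauss--Dirichlet, in the streamlined form of Davenport and Ankeny) via ternary quadratic forms and the geometry of numbers.

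\begin{enumerate}
\item Using Dirichlet's theorem on primes in arithmetic progressions, choose an auxiliary prime $q$ such that $-m$ is a quadratic residue modulo $q$, with the right congruence conditions modulo $4$ or $8$ depending on $m \bmod 8$.
\item Through quadratic reciprocity, this choice lets one build an integral positive definite ternary quadratic form of determinant $1$ that represents $m$.
\item Show that every positive definite integral ternary form of determinant $1$ is equivalent to $x^2+y^2+z^2$. This follows from Hermite's bound on the minimum of a positive definite form, $\min \le (4/3)^{(n-1)/2}\det^{1/n}$, together with a reduction to a binary form of determinant $1$, which is equivalent to $x^2+y^2$.
\end{enumerate}

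An alternative I would keep in reserve is Ankeny's method. One constructs a lattice $\Lambda \subset \mathbb{Z}^3$ of index $m^{3/2}$ or $2m^{3/2}$ on which $x^2+y^2+z^2 \equiv 0 \pmod m$ (or $\pmod{2m}$). Minkowski's convex body theorem applied to a ball of suitable radius then produces a nonzero lattice point with $x^2+y^2+z^2 = m$. For $m \equiv 3 \pmod 8$, a variant uses a prime $q$ with $m = 2q - $ (square) and the two-square criterion of the previous lemma.

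The main obstacle is the existence step: arranging the auxiliary prime and the congruence conditions so that the ternary form (or the lattice) really has determinant $1$ and represents $m$. This is where Dirichlet's theorem and the case analysis modulo $8$ are unavoidable. Since the paper cites the result from \cite{Hardy2008}, I would follow that classical proof for this step rather than reprove Dirichlet's theorem.
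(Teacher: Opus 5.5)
The paper does not prove this lemma; it quotes the classical Gauss--Legendre three-square theorem, so the citation is its whole ``proof''. Against that, your plan holds up structurally:
\begin{itemize}
\item Your necessity argument is complete and correct: squares are $0,1,4 \pmod 8$, and $4\mid a^2+b^2+c^2$ forces $a,b,c$ even.
\item Reducing sufficiency to $m\equiv 1,2,3,5,6 \pmod 8$ is right.
\item Step 3 correctly shows that a positive definite integral ternary form of determinant $1$ is equivalent to $x^2+y^2+z^2$: Hermite's bound forces minimum $1$, you split off a square, and you repeat for the binary form.
\end{itemize}
For the hard existence step you, like the paper, rely on the literature, which is acceptable here. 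Two parts of your sketch are wrong, though, and would mislead anyone filling it in.

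\emph{Step 1.} The decisive condition is a congruence modulo $m$, not just modulo $4$ or $8$. You need $d\ge 1$ such that:
\begin{itemize}
\item $dm-1=q$ (for $m\equiv 2 \pmod 4$) or $dm-1=2q$ (for odd $m$), with $q$ prime;
\item $-d$ is a quadratic residue modulo $q$. This is equivalent to $-m$ being one, since $dm\equiv 1 \pmod q$.
\end{itemize}
Dirichlet's theorem is applied to a progression modulo $4m$ to get such $q$. Then choose $b$ with $b^2\equiv -d \pmod{dm-1}$ and put $a=(b^2+d)/(dm-1)$. The matrix
\[
\begin{pmatrix} a & b & 1\\ b & dm-1 & 0\\ 1 & 0 & m\end{pmatrix}
\]
is integral and positive definite (its leading minors are $a$, $d$, $1$). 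It has determinant $1$ and represents $m$ at $(0,0,1)$.

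\emph{The reserve Ankeny argument cannot work as stated.} A sublattice of $\mathbb{Z}^3$ has integral index, so index $m^{3/2}$ is impossible for non-square $m$. Indeed, for odd squarefree $m$, take any sublattice of $\mathbb{Z}^3$ on which $x^2+y^2+z^2\equiv 0 \pmod m$. Modulo each $p\mid m$ it reduces to a totally isotropic subspace of dimension at most $1$, so its index is at least $m^2$. That is far too large for Minkowski with radius $\sqrt{2m}$. Ankeny's lattice is not integral: its covolume $m^{3/2}$ comes from coordinates involving $\sqrt{2q}$ and $\sqrt{m/(2q)}$. Minkowski then gives only $m=X^2+2\varphi(R,T)$, where $\varphi$ is a binary form of discriminant $-m$. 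The two-square criterion, together with how $q$ was chosen, is what shows $m-X^2$ is a sum of two squares. That is where the previous lemma enters. It does not enter through a prime $q$ with $m-x^2=2q$; finding one would require a quadratic polynomial to take a prime value, which Dirichlet's theorem does not give.
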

Combining the preceding results, we obtain the following classification.
\begin{theorem}
	(\cite{Liu2026}) Let $m \in \mathbb{Z}^{+}$ with standard prime factorization $m = p_1^{e_1}\cdots p_k^{e_k}$. Then $m$ belongs to exactly one of the following mutually exclusive categories:
	\begin{enumerate}[1.]
		\item $2|e_i$ for any $i$ ($m$ is a perfect square).
		\item There exists $i$ such that $e_i$ is odd. And for any $p_j \equiv 3\pmod{4}$, $2|e_j$ ($m$ is not a perfect square and $m$ is a sum of two squares)
		\item $m = 4^r(8t+7)$ for some $r,t \in \mathbb{Z}^{+}$ ($m \neq a^2+b^2+c^2$ for any $a,b,c \in \mathbb{Z}$).
		\item None of the above conditions hold ($m$ is not perfect square, sum of two squares but sum of three squares)
	\end{enumerate}
	Accordingly, $\iota(m)$ is determined by:
	$$
	\iota(m) = \begin{cases}
		1 & \text{ if }n\text{ is of type 1} \\
		2 & \text{ if }n\text{ is of type 2} \\
		4 & \text{ if }n\text{ is of type 3} \\
		3 & \text{ if }n\text{ is of type 4} 
	\end{cases}
	$$
\end{theorem}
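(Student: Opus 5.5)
We prove the two parts of the theorem separately: first that the four types partition $\mathbb{Z}^{+}$, then the formula for $\iota(m)$.

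\textbf{The types partition $\mathbb{Z}^{+}$.} Type 4 is defined as the complement of types 1--3, so it suffices to show that types 1, 2, 3 are pairwise disjoint. Types 1 and 2 are disjoint by definition, since type 2 requires some odd exponent. For type 3 we read $r,t\in\mathbb{Z}_{\geq 0}$, as in the three-square lemma; this is what the intended classification needs, e.g.\ for $m=7$. Suppose $m=4^r(8t+7)$. The odd number $8t+7\equiv 3\pmod 4$ must have a prime factor $p\equiv 3 \pmod 4$ with odd exponent, since a product of primes $\equiv 1\pmod 4$ and even powers of primes $\equiv 3 \pmod 4$ is $\equiv 1\pmod 4$. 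That exponent is unchanged in $m$, because the factor $4^r$ only involves the prime $2$. Hence $m$ is neither a perfect square nor a sum of two squares, so type 3 is disjoint from types 1 and 2.

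\textbf{The formula for $\iota(m)$.} The central observation is that, because $\iota$ uses \emph{positive} squares, $\iota(m)\le s$ holds exactly when $m$ is a sum of at most $s$ integer squares with zeros allowed: we simply drop the zero terms. We then check each type.
\begin{itemize}
\item Type 1: $m$ is a positive square, so $\iota(m)=1$.
\item Type 2: by the two-square lemma, $m=a^2+b^2$, so $\iota(m)\le 2$. Since $m$ is not a square, $\iota(m)\neq 1$, hence $\iota(m)=2$.
\item Type 4: $m$ is not of type 3, so by the three-square lemma it is a sum of three integer squares and $\iota(m)\le 3$. It is neither a square nor a sum of two squares (otherwise it would be of type 1 or 2; note every square is trivially $a^2+0^2$), so $\iota(m)=3$.
\item Type 3: Lagrange's four-square theorem gives $\iota(m)\le 4$. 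The three-square lemma shows $m$ is not a sum of three integer squares, and therefore not a sum of at most three positive squares, so $\iota(m)=4$.
\end{itemize}

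\textbf{Main obstacle.} The only delicate point is the type 3/type 2 disjointness, that is, the congruence argument showing $8t+7$ has a prime $\equiv 3\pmod 4$ to an odd power. Alongside this, one must consistently translate between "sum of $s$ integer squares" in the cited lemmas and "at most $s$ positive squares" in the definition of $\iota$. Everything else is bookkeeping with the cited classical lemmas.
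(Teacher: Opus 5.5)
Your proof is correct and takes the same route as the paper. The paper gives no separate argument; it obtains this classification simply by combining Lagrange's four-square theorem with the two- and three-square characterizations, and you supply the omitted details: the disjointness of type 3 from types 1--2, and the translation between integer squares and positive squares in $\iota$. You are also right to read $r,t\in\mathbb{Z}_{\geq 0}$ in type 3, since with $r,t\in\mathbb{Z}^{+}$ as printed, $m=7$ would fall into type 4 even though $\iota(7)=4$.
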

\begin{theorem}\label{thm:Liu}
	(\cite{Liu2026}) Let $A(a) = (a,a+1^2,a+2^2,a+3^2,\ldots)$ and $S(A(a))$ be a numerical semigroup generated by $A(a)$. Suppose that there exists $r$ with $1\leq r\leq a-1$ satisfying $\iota(r) = 4, \iota(a+r) \geq 3,$ and $\iota(2a+r) \geq 2$. Then
	$$
	F(S(A(a))) = 3a + \max\{r | \iota(r) = 4, \iota(a+r) \geq 3, \iota(2a+r) \geq 2\}.
	$$
\end{theorem}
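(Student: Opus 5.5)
The idea is to describe $S(A(a))$ one residue class modulo $a$ at a time. Write an element as $\sum_i \lambda_i(a+i^2)$. It equals $ka+m$, where $k=\sum_i\lambda_i$ is the number of generators used and $m=\sum_{i\ge 1}\lambda_i i^2$ is a sum of at most $k$ positive squares (the $i=0$ terms contribute nothing to $m$). Hence $n\in S(A(a))$ if and only if there are $k\ge 0$ and $m\ge 0$ with $n=ka+m$, $m$ either $0$ or a sum of squares with $\iota(m)\le k$.

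Since $\iota\le 4$ by Lagrange's Four-Square Theorem, every $n\ge 4a$ lies in $S(A(a))$: take $k=\lfloor n/a\rfloor\ge 4$ and $m=n-ka$, which is either $0$ or positive with $\iota(m)\le 4\le k$. So $F(S(A(a)))<4a$.

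Now take $n=3a+r$ with $0\le r\le a-1$. The possible decompositions are $(k,m)=(3,r)$, $(2,a+r)$, $(1,2a+r)$ and $(0,3a+r)$. The last is impossible because $3a+r>0$ forces $m\ne 0$, which needs $k\ge 1$. Hence $3a+r\notin S(A(a))$ exactly when $r\ge 1$, $\iota(r)>3$, $\iota(a+r)>2$ and $\iota(2a+r)>1$. These are precisely the conditions $\iota(r)=4$, $\iota(a+r)\ge 3$, $\iota(2a+r)\ge 2$ in the statement.

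So by hypothesis some element of $[3a,4a)$ is a gap, and the largest such gap is $3a+r_{\max}$, with $r_{\max}$ the stated maximum. Any gap below $3a$ is smaller, so it cannot be the Frobenius number. Combined with $F(S(A(a)))<4a$, this gives
$$F(S(A(a)))=3a+\max\{r \mid \iota(r)=4,\ \iota(a+r)\ge 3,\ \iota(2a+r)\ge 2\}.$$

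The main point requiring care is the first step: that membership depends only on $\iota(m)\le k$. The delicate direction is the converse, that if $\iota(m)\le k$ then $ka+m\in S(A(a))$. To see it, write $m$ as a sum of $\iota(m)$ positive squares, one generator $a+i^2$ for each square $i^2$, and pad with $k-\iota(m)$ copies of the generator $a$. Finally, we use the convention $\iota(0)=0$, so that the decomposition with $m=0$ is covered.
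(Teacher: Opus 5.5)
Your proof is correct and complete. The paper gives no proof of this statement: it quotes the result from Liu and Xin and uses it as a black box. So there is no in-paper argument to compare against, and your residue-window computation is a self-contained justification.

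\begin{itemize}
\item The reduction is right. You show that $n\in S(A(a))$ if and only if $n=ka+m$ with $m=0$ or $\iota(m)\le k$. The number of generators used bounds the number of positive squares, and padding with copies of $a$ gives the converse.
\item For $n=3a+r$ with $0\le r\le a-1$, only $k\in\{0,1,2,3\}$ are available. This turns non-membership into exactly the three conditions of the statement, together with $r\ge 1$.
\item Your route uses only $\iota\le 4$, that is, Lagrange's four-square theorem. The two- and three-square characterizations listed in the paper are not needed for this theorem; they matter only later, when checking the three conditions for specific $r$.
\end{itemize}

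One point you handled tacitly but correctly: the maximum in the displayed formula must be read over $1\le r\le a-1$, as in Conjecture~\ref{conj:Liu}. Without that restriction the set has no maximum. For example, it contains every $r\equiv 7 \pmod 8$ with $3\mid a+r$, $9\nmid a+r$ and $2a+r$ not a square, and there are infinitely many such $r$. Your proof identifies the gaps in $[3a,4a)$ with exactly this restricted set.
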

\begin{conjecture}\label{conj:Liu}
	(\cite{Liu2026}) Let $A(a) = (a,a+1^2,a+2^2,a+3^2,\ldots)$. For $a > 30$, For every integer $a>30$, the hypotheses of
	Theorem \ref{thm:Liu} are satisfied.
	Consequently,
	$$
	F(S(A(a))) = 3a + \max\{r |1\leq r\leq a-1, \iota(r) = 4, \iota(a+r) \geq 3, \iota(2a+r) \geq 2\}.
	$$
\end{conjecture}
\section{Main Results}
\subsection{The case for $a \equiv 0\pmod{4}$}
In this section, we consider the case
$4\mid a$. Computational evidence suggests that
$$
\max\{r |1\leq r\leq a-1, \iota(r) = 4, \iota(a+r) \geq 3, \iota(2a+r) \geq 2\}
$$
is equal to $a-1$ if $a\equiv 0\pmod{8}$ and is greater than or equal to $a-5$ if $a \equiv 4\pmod{8}$.

Based on these results, we obtain
\begin{proposition}
	Let $a$ be a positive integer that is a multiple of $4$. Then we have 
	\begin{enumerate}
		\item $\max\{r|1\leq r\leq a-1, \iota(r) = 4\} = a - 1$ if $a \equiv 0\pmod{8}$,
		\item $a - 5 \leq \max\{r|\iota(r) = 4\} \leq a - 4$ if $a \geq 12$ and $a \equiv 4\pmod{8}$.
	\end{enumerate}
\end{proposition}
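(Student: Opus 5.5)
Both parts reduce to the classification theorem of \cite{Liu2026} recalled above, in the form of Legendre's three-square theorem: $\iota(n)=4$ if and only if $n=4^r(8t+7)$ for some $r,t\in\mathbb{Z}_{\geq 0}$. Note that the classification as stated in the paper writes $r,t\in\mathbb{Z}^{+}$, but the underlying three-square lemma allows $r,t\geq 0$. I will use the latter, since, for instance, $\iota(7)=4$. So the task is to locate the largest integer of the form $4^r(8t+7)$ below $a$ in each residue class of $a$ modulo $8$.

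For part 1, let $a\equiv 0\pmod 8$. Then $a-1\equiv 7\pmod 8$, so $a-1=4^0(8t+7)$ with $t=(a-8)/8\geq 0$. Hence $\iota(a-1)=4$. Since $a-1$ is the largest admissible value $r\leq a-1$, the maximum equals $a-1$.

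For part 2, let $a\equiv 4\pmod 8$ with $a\geq 12$. For the lower bound I take $r=a-5$. Then $a-5\equiv 7\pmod 8$ and $a-5\geq 7>0$, so $\iota(a-5)=4$ and the maximum is at least $a-5$. For the upper bound I check the three candidates $a-1,a-2,a-3$, which are the only integers in $(a-4,a-1]$:
\begin{itemize}
\item $a-1\equiv 3\pmod 8$ is odd but not $\equiv 7\pmod 8$, so it is not of the form $4^r(8t+7)$ (odd forces $r=0$).
\item $a-3\equiv 1\pmod 8$, and the same reasoning applies.
\item $a-2\equiv 2\pmod 8$ is even but not divisible by $4$. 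This excludes $r\geq 1$, and $r=0$ would require $a-2$ to be odd.
\end{itemize}
Thus none of these has $\iota=4$. The maximum is therefore at most $a-4$, and in fact lies in $\{a-5,a-4\}$. (Whether $a-4$ is attained depends on whether $(a-4)/4$ is itself of the form $4^{r'}(8t+7)$, which is why the statement only gives the range.)

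The only delicate point is the bookkeeping about $r=0$ in the Legendre form and reading the second part as a statement about $r\leq a-1$. The residue computations themselves are routine, and the hypothesis $a\geq 12$ is only used to guarantee $a-5\geq 7$.
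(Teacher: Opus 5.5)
Your proposal is correct and follows essentially the same route as the paper: you reduce $\iota(n)=4$ to $n=4^r(8t+7)$ via the three-square theorem, then read off the residues of $a-1,\dots,a-5$ modulo $8$. You are somewhat more careful than the paper's proof, which only lists the residues of $a-1,a-2,a-3$; you justify why those residues are excluded, allow $r,t\geq 0$ so that $a-1$ and $a-5$ are genuinely covered, and make explicit the restriction $r\leq a-1$ implicit in part 2.
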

\begin{proof}
	Let $a \equiv 0\pmod{8}$. Since 
	$$
	a - 1 \equiv 7\pmod{8},
	$$
	we have $\iota(a-1) = 4$. 
	
	Let $a \geq 12$ and $a \equiv 4\pmod{8}$. Since
	$$
	a - 1 \equiv 3, a - 2\equiv 2, a - 3\equiv 1\pmod{8},
	$$
	$r \neq a - 1, a - 2, a - 3$. Moreover,
	$$
	a - 5 \equiv 7\pmod{8}.
	$$
	Therefore $\iota(a-5) = 4$ and since
	$$
	a - 4 \equiv 0\pmod{8},
	$$
	$\iota(a-4) = 4$ if $a - 4$ is of the form $4^e(8m+7)$ for some $e\geq 2, m\geq 0$.
\end{proof}
Furthermore, we have the following
\begin{lemma}
	Let $a$ be a positive integer that is a multiple of $4$. Then we have 
	\begin{enumerate}
		\item $\max\{r |1\leq r\leq a-1, \iota(r) = 4, \iota(a+r) \geq 3, \iota(2a+r) \geq 2\} = a - 1$ if $a \equiv 0\pmod{8}$,
		\item $a - 5 \leq \max\{r | 1\leq r\leq a-1, \iota(r) = 4, \iota(a+r) \geq 3, \iota(2a+r) \geq 2\} \leq a - 4$ if $a \geq 12$ and $a \equiv 4\pmod{8}$.
	\end{enumerate}
\end{lemma}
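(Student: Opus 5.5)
The plan is to check, for the candidate value of $r$, the three conditions $\iota(r)=4$, $\iota(a+r)\geq 3$ and $\iota(2a+r)\geq 2$ by reducing each quantity modulo $8$. Two facts from the classification theorem of \cite{Liu2026} do all the work. First, any $n\equiv 7\pmod 8$ has the form $4^0(8t+7)$, so $\iota(n)=4$. Second, any $n\equiv 3\pmod 4$ is not a sum of two squares, since squares are $0$ or $1$ modulo $4$; so $\iota(n)\geq 3$. For the upper bounds we use that the set in the lemma is a subset of $\{r \mid 1\le r\le a-1,\ \iota(r)=4\}$, whose maximum the preceding Proposition already controls.

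\textbf{Case $a\equiv 0\pmod 8$.} Take $r=a-1$, which is trivially the largest admissible value. Then
\[
r\equiv 7,\qquad a+r=2a-1\equiv 7,\qquad 2a+r=3a-1\equiv 7 \pmod 8 .
\]
Hence all three values of $\iota$ equal $4$, and every condition holds. So the maximum is exactly $a-1$.

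\textbf{Case $a\equiv 4\pmod 8$, $a\geq 12$.} Take $r=a-5$; note that $a\ge 12$ guarantees $r\ge 1$. We have $2a\equiv 0$ and $3a\equiv 4\pmod 8$, so
\[
r\equiv 7,\qquad a+r=2a-5\equiv 3,\qquad 2a+r=3a-5\equiv 7 \pmod 8 .
\]
Thus $\iota(r)=4$ and $\iota(2a+r)=4\ge 2$. Since $a+r\equiv 3\pmod 4$, it is not a sum of two squares, so $\iota(a+r)\ge 3$. This gives the lower bound $a-5$. The upper bound $a-4$ follows from part 2 of the Proposition: $a-1$, $a-2$, $a-3$ have residues $3$, $2$, $1$ modulo $8$, and none of these can equal $4^e(8m+7)$. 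Indeed an odd residue forces $e=0$ and hence residue $7$, while residue $2$ is impossible.

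The only step that needs care is $\iota(a+r)\ge 3$ in the second case. There the mod-$8$ argument gives only residue $3$, not $7$, so we must invoke the two-squares obstruction modulo $4$ rather than the three-squares obstruction.
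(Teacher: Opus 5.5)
Your proposal is correct and follows essentially the paper's own route. The paper takes $r=a-1$ (resp.\ $r=a-5$), checks the residues of $r$, $a+r$, $2a+r$ modulo $8$, and gets the upper bound $a-4$ from the Proposition's exclusion of $a-1,a-2,a-3$. You go slightly further than the paper by justifying $\iota(2a-5)\ge 3$ via the mod-$4$ two-squares obstruction, which the paper only asserts.
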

\begin{proof}
	Let $a \equiv 0\pmod{8}$. Then
	$$
	2a -  1, 3a - 1 \equiv 7\pmod{8}
	$$
	implies that 
	$$
	\iota(2a - 1) \geq 3, \iota(3a - 1) \geq 2.
	$$
	
	Let $a \geq 12$ and $a \equiv 4\pmod{8}$. Then
	$$
	a - 5 \equiv 7\pmod{8}, 2a - 5 \equiv 3\pmod{8}, 3a - 5 \equiv 7\pmod{8}
	$$
	implies that
	$$
	\iota(2a - 5) \geq 3, \iota(3a - 5) \geq 2.
	$$
\end{proof}

Combining the previous results with the direct computation for $a=4$,
we obtain a complete description of the Frobenius number when
$4\mid a$.

If $8\mid a$, then the Frobenius number is given by the linear formula
$4a-1$.
If $a\equiv4\pmod8$, then it is given by one of the two linear formulas
$4a-5$ or $4a-4$, depending on the arithmetic properties of
$a-4$ and $2a-4$.

Furthermore, computational data for $a<12$
(see Table 1 of \cite{Liu2026})
shows that the same pattern remains valid.
Therefore we obtain the following theorem.
\begin{theorem}\label{thm:4}
	Let $A(a) = (a, a + 1^2, a + 2^2, \ldots )$ and $S(A(a))$ be a numerical semigroup generated by $A(a)$ where $a \in \mathbb{Z}^{+}, 4 | a$. Then we have
	$$
	F(S(A(a))) = \begin{cases}
		4a - 1 & \text{ if }8 | a \\
		4a - 5 & \text{ if }8 \not| a,\text{ and } \\
		& a - 4 \neq 4^e(8m+7)\text{ or } 2a - 4 = x^2 + y^2 \\
		4a - 4 & \text{ otherwise } 
	\end{cases}
	$$
	$$
	F(S(A(a)))
	=
	\begin{cases}
		4a-1,
		&
		8\mid a,
		\\[1ex]
		4a-5,
		&
		a\equiv4\pmod8
		\text{ and }
		\bigl(
		a-4\neq4^e(8m+7)
		\text{ for all }e,m
		\ \text{or}\
		2a-4=x^2+y^2
		\bigr),
		\\[1ex]
		4a-4,
		&
		\text{otherwise}.
	\end{cases}
	$$
\end{theorem}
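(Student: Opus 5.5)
The plan is to reduce everything to Theorem~\ref{thm:Liu}, which gives $F(S(A(a)))=3a+R(a)$ with
$$R(a)=\max\{r\mid 1\le r\le a-1,\ \iota(r)=4,\ \iota(a+r)\ge 3,\ \iota(2a+r)\ge 2\},$$
provided the set is nonempty. So it suffices to show the set is nonempty and to compute $R(a)$ exactly: $R(a)=a-1$ if $8\mid a$, and $R(a)\in\{a-5,a-4\}$ according to the stated criterion if $a\equiv 4\pmod 8$. Throughout, $\iota(n)\ge 3$ means $n$ is not a sum of two integer squares, zeros allowed; this covers perfect squares. Likewise $\iota(n)\ge 2$ means $n$ is not a perfect square. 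The case $a=4$ lies outside the range $a\ge 12$ used below, so I would settle it by a direct finite computation of $S(A(4))$, or by reading it off Table 1 of \cite{Liu2026}, and check that it matches the formula.

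\emph{Case $8\mid a$.} The preceding Lemma already shows that $r=a-1$ is admissible. Since $r\le a-1$, this is the largest possible value, so $R(a)=a-1$ and $F=4a-1$. Nonemptiness is automatic.

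\emph{Case $a\equiv 4\pmod 8$, $a\ge 12$.} By the Proposition, $a-1,a-2,a-3$ have $\iota<4$. By the Lemma, $r=a-5$ is admissible, so the set is nonempty and $R(a)\in\{a-5,a-4\}$. It remains to decide exactly when $r=a-4$ is admissible, by checking its three conditions in turn.
\begin{enumerate}
\item The condition $\iota(a-4)=4$ holds iff $a-4=4^e(8m+7)$. Since $8\mid a-4$, this forces $e\ge 2$.
\item The condition $\iota(2a-4)\ge 3$ holds iff $2a-4$ is not of the form $x^2+y^2$.
\item The condition $\iota(3a-4)\ge 2$ turns out to be automatic once the first condition holds. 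Write $a=4\bigl(4^{e-1}(8m+7)+1\bigr)$. Then
$$3a-4=4\bigl(3\cdot 4^{e-1}(8m+7)+2\bigr).$$
Because $e\ge 2$, the inner factor is $\equiv 2\pmod 4$. Hence $3a-4=8\cdot(\text{odd})$, whose $2$-adic valuation is $3$, so it is not a perfect square.
\end{enumerate}
Therefore $R(a)=a-4$ exactly when $a-4=4^e(8m+7)$ and $2a-4$ is not a sum of two squares; otherwise $R(a)=a-5$. Adding $3a$ gives the three cases of Theorem~\ref{thm:4}.

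The main obstacle is the third condition, the non-squareness of $3a-4$. The stated formula silently drops this condition, so it must be derived from the first condition via the $2$-adic valuation argument above. A smaller point of care is the interpretation of ``$2a-4=x^2+y^2$'' with zeros allowed, so that perfect squares are correctly excluded via $\iota\ge 3$.
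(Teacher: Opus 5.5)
Your proof is correct and follows the paper's route. You reduce to Theorem~\ref{thm:Liu}, use the Proposition and Lemma to pin $R_a$ to $a-1$ when $8\mid a$ and to $\{a-5,a-4\}$ when $a\equiv 4\pmod 8$, $a\ge 12$, and settle $a=4$ by direct computation. Your explicit check that $\iota(3a-4)\ge 2$ follows automatically from $a-4=4^e(8m+7)$ with $e\ge 2$ (since then $v_2(3a-4)=3$) fills a step the paper leaves unstated.
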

Theorem~\ref{thm:4} provides an explicit solution for one quarter of the residue classes modulo \(8\) in the problem of determining the Frobenius number of

$$
\langle a,a+1^2,a+2^2,\ldots,a+M^2\rangle,
\qquad M\ge a-1.
$$

\subsection{The case for $a \not\equiv 0\pmod{4}$}
Let $R_a = \max\{r |1\leq r\leq a-1, \iota(r) = 4, \iota(a+r) \geq 3, \iota(2a+r) \geq 2\}$. By Theorem \ref{thm:Liu}, if $R_a$ exists, then
$$
F(S(A(a))) = 3a + R_a.
$$
\begin{theorem}
	Let $a \not\equiv 0,4\pmod{8}$. Then there exists a constant $C(a\bmod{8})$ depending only on the residue class of $a$ modulo $8$ such that
	$$
	R_a \geq a - C(a\bmod{8}).
	$$
	More precisely,
	\begin{center}
	\begin{tabular}{c|c}
		$a\bmod8$ & $C(a\bmod8)$\\
		\hline
		1 & 66\\
		2 & 43\\
		3 & 44\\
		5 & 70\\
		6 & 47\\
		7 & 48
	\end{tabular}
	\end{center}
\end{theorem}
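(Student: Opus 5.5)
The plan is to exhibit, for each residue class of $a$ modulo $8$, an explicit $r=a-c$ with $1\le c\le C(a\bmod 8)$ satisfying the three conditions in the definition of $R_a$. Each condition will be certified by a purely $2$-adic criterion, so that the choice of $c$ depends only on $a$ modulo a fixed power of $2$. We use the classification theorem of \cite{Liu2026} quoted above:
\begin{itemize}
\item $\iota(n)=4$ whenever $n=4^k(8t+7)$;
\item $\iota(n)\ge 3$ whenever $n$ is not a sum of two squares, in particular when $n\equiv 3\pmod 4$ or $n=4^k(8t+7)$;
\item $\iota(n)\ge 2$ whenever $n$ is not a perfect square, in particular when $n\equiv 2,3,5,6,7\pmod 8$, or $n=4^k m$ with $m\equiv 2,3\pmod 4$ or $m\equiv 5\pmod 8$.
\end{itemize}
Since $R_a$ is a maximum, it suffices to show that some $r\ge a-C$ is admissible. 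For $a\le C$ the inequality is vacuous as long as $R_a$ exists, so we may assume $a>C(a\bmod 8)$, which guarantees $r\ge 1$.

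We first treat the "easy" classes, where one congruence modulo $8$ already handles everything. Take $r\equiv 7\pmod 8$, so $c\equiv a+1\pmod 8$. Then $a+r\equiv 2a+7$ and $2a+r\equiv 3a+7\pmod 8$.
\begin{itemize}
\item For $a\equiv 2\pmod 8$: $a+r\equiv 3$ and $2a+r\equiv 5\pmod 8$. So $c=3$ works, and $c\le 43$ holds with room to spare.
\item For $a\equiv 6\pmod 8$: $a+r\equiv 3$ and $2a+r\equiv 1\pmod 8$. Here $2a+r$ may be a square, so we must move to the next admissible $c$ (step $8$) and refine modulo $16$, $32$, and so on.
\item For $a$ odd: $a+r\equiv 2a+7$ is even, and is $\equiv 0$ or $4\pmod 8$. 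So $\iota(a+r)\ge 3$ requires $a+r=4^k(8t+7)$ with $k\ge 1$, which again forces a refinement modulo higher powers of $2$.
\end{itemize}

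The general step, for each class, is as follows. Write $c=c_0+8j$ with $c_0$ the base value above, and $a=a_0+2^N u$. Compute the $2$-adic valuation and odd part of $a+r=2a-c$ and of $2a+r=3a-c$ as functions of $j$ and of $a$ modulo $2^N$. Then choose $j$ so that $2a-c=4^k(8t+7)$ (or $\equiv 3\pmod 4$), while $3a-c$ lands in one of the non-square patterns. We may also allow $r$ itself to be of the form $4^k(8t+7)$ with $k\ge 1$ instead of $r\equiv 7\pmod 8$; this enlarges the candidate set and is what should bring the constants down to the tabulated values (e.g.\ $66$ for $a\equiv1$, rather than the roughly $120$ one gets from forcing $2a-c=16(8t+7)$ alone).

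Concretely, for each residue $a\bmod 8$ we will produce a finite table indexed by $a \bmod 2^N$ (probably $N\le 7$), listing a $c\le C(a\bmod 8)$ together with the certifying congruences. The verification of this table is finite, since each certificate depends only on $a\bmod 2^N$.

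The main obstacle is the odd classes $1,3,5,7$. There $a+r$ is forced to be even, so we need exact control of its $4$-adic part, and for each $c$ the condition holds only on a thin $2$-adic set of $a$. We must show that the union of these sets, over the finitely many $c\le C$, covers all residues of $a$ modulo $2^N$ in the class. I would first try the $2$-adic tree approach: branch on successive binary digits of $a$, and at each node choose the smallest $c$ that settles all three conditions for every extension. The resulting maximal $c$ should be exactly the constant in the table, and the depth of the tree gives $N$.
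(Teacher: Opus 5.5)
\textbf{There is a genuine gap.} The conditions $\iota(a+r)\ge 3$ and $\iota(2a+r)\ge 2$ cannot in general be certified by congruences modulo powers of $2$, so your $2$-adic tree does not close.

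Two facts limit what a congruence $n\equiv n_0 \pmod{2^N}$ can prove. It forces $n$ not to be a sum of two squares only if the odd part of $n_0$ is $\equiv 3\pmod 4$; otherwise the class contains some $2^v p$ with $p\equiv 1\pmod 4$ prime. It forces $n$ to be a non-square only if $v_2(n_0)$ is odd or the odd part of $n_0$ is $\not\equiv 1\pmod 8$.

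Now apply this to each class.
\begin{itemize}
\item For $a\equiv 1$ or $5\pmod 8$, every $r$ with $\iota(r)=4$ is either $\equiv 7\pmod 8$ or $\equiv 0\pmod 4$. In the first case $2a+r\equiv 1\pmod 8$; in the second, $a+r$ is odd and $\equiv 1\pmod 4$. So for these classes no $c$ at all admits a $2$-adic certificate.
\item For $a\equiv 2\pmod 8$, only $r=4(8t+7)$ can be certified. For $a=58$ the only such $r<a$ is $28$, and $2a+r=144=12^2$. Yet the statement asserts $R_{58}\ge 15$.
\item For $a\equiv 6\pmod 8$, only $r=4^k(8t+7)$ with $k\ge 2$ can be certified. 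For $a=110$ there is no such $r<a$ at all, since the smallest is $112$. Yet the statement asserts $R_{110}\ge 63$.
\end{itemize}

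Your opening computation is also off. With $r=a-c\equiv 7\pmod 8$ you have $a+r\equiv a+7$ and $2a+r\equiv 2a+7\pmod 8$, not $2a+7$ and $3a+7$. Consequences:
\begin{itemize}
\item $c=3$ does not work for $a\equiv 2$. For $a=34$ you get $r=31$ and $a+r=65=8^2+1^2$; the paper's table gives $R_{34}=28$.
\item $a\equiv 7$ is actually the easy class: $a+r\equiv 6$ and $2a+r\equiv 5\pmod 8$, so $c=8$ works.
\item $a\equiv 3$ is handled $2$-adically by taking $r\equiv 28\pmod{32}$.
\end{itemize}

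\textbf{The missing idea} is to certify ``not a sum of two squares'' with an odd prime $p\equiv 3\pmod 4$, namely $p=3$, as the paper does.
\begin{itemize}
\item Keep $r=a-c\equiv 7\pmod 8$ and let $c=c_0+8j$. Then $2a-c$ runs through all residues mod $3$ with period $3$ in $j$.
\item Two multiples of $3$ differing by $24$ (or $48$) cannot both be divisible by $9$. So within six (or nine) steps some $a+r=2a-c$ has $v_3=1$ and is not a sum of two squares. For $a=58$ this gives $105$; for $a=110$ it gives $213$.
\item For $a\equiv 2,3,6,7$, non-squareness of $2a+r\equiv 3$ or $5\pmod 8$ is then automatic.
\item For $a\equiv 1,5$, there are at least two admissible candidates whose values $3a-c$ differ by $24$ or $48$. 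Two squares with such a difference are at most $169$, so for large $a$ one of the two values is a non-square.
\end{itemize}
This is exactly where the tabulated constants $c_0+40$ and $c_0+64$ come from. They are artifacts of the $3$-adic search, not of any $2$-adic tree, so you should not expect your tree's maximal $c$ to reproduce them.
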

\begin{proof}
	For each residue class of $a$ we can choose a family
	$$
	r = a - d
	$$
	where $d$ belongs to a fixed arithmetic progression chosen so that
	$$
	r \equiv 7\pmod{8}.
	$$
	Hence
	$$
	\iota(r) = 4.
	$$
	Therefore it suffices to show that for some bounded value of $d$,
	$$
	\iota(a+r) \geq 3,\text{ and }\iota(2a + r) \geq 2.
	$$
	The first condition is satisfied using the prime number $p = 3$. For each residue class, we consider
	$$
	N_j = 2a - d_j,
	$$
	where $d_j$ runs through the corresponding arithmetic progression. Since $8 \equiv 2\pmod{3}$, the condition $3 | N_j$ occurs periodically with period $3$. Among the first six or nine values of $j$, there are at least two values for which $3 | N_j$. Since 
	$$
	N_{j+3} = N_j - 24,
	$$ 
	both two numbers cannot be divisible by $9$. Hence at least one of these values satisfies
	$$
	3 | N_j,\text{ and }9 \nmid N_j.
	$$
	This implies that $N_j$ cannot be represented as a sum of two squares. Therefore
	$$
	\iota(a+r) = \iota(N_j) \geq 3.
	$$
	For the second condition, we consider
	$$
	2a + r = 3a - d_j.
	$$
	For the residue classes
	$$
	a \equiv 2,3,6,7\pmod{8},
	$$
	we have
	$$
	3a - d_j \equiv 3\text{ or }5\pmod{8}.
	$$
	Since a square is congruent only to $0,1,\text{ or }4\pmod{8}, 3a - d_j$ cannot be a perfect square. Hence
	$$
	\iota(2a + r) \geq 2.
	$$
	
	Consequently, we show that $d$ is bounded for any sufficiently large $a$ where $a \equiv 2,3,6,7\pmod{8}$.
	
	Let $a \equiv 2,3,6,\text{ or }7\pmod{8}$. Because at least one of $2a - (a\bmod{8} + 1 + 8j)$ where $0\leq j\leq 5$ cannot be represented as a sum of two squares, if $a > a\bmod{8} + 41$, then $R_a \geq a - (a\bmod{8} + 1 + 8j)$ exists.

	Let $a \equiv 1,5\pmod{8}$. Since
	$$
	3a - d_j \equiv 1\pmod{8},
	$$
	it can be a perfect square.
	
	From the argument above we can choose at least two candidate values $d_{j1}$ and $d_{j2}$ such that
	$$
	\iota(a+r) \geq 3.
	$$
	Without loss of generality, let $j_2 > j_1$. Since $j_2 - j_1 = 3\text{ or }6$, we have
	$$
	d_{j2} - d_{j1} = 8(j_2 - j_1) = 24\text{ or }48.
	$$
	If both $3a - d_{j1}$ and $3a - d_{j2}$ are squares, then two squares should differ by $24$ or $48$. The only possible positive integer solutions for $x^2 - y^2 = 24$ or $x^2 - y^2 = 48$ are
	$$
	(x,y) = (7,5), (8,4),\text{ and }(13,11).
	$$
	Hence both values can be perfect squares only when they are not greater than $169$.
	
	Let $a \equiv 1\pmod{8}$. Since $d_j \leq 2 + 8\cdot 8 = 66$, if $a \geq 81$, then we have
	$$
	3a - d_j \geq 177 > 169. 
	$$
	Therefore $3a - d_{j}$ cannot be a square.
	
	Let $a \equiv 5\pmod{8}$. Since $d_j \leq 6 + 8\cdot 8 = 70$, if $a \geq 85$, then we have
	$$
	3a - d_j \geq 185 > 169.
	$$
	Therefore $3a - d_{j}$ cannot be a square.
\end{proof}

\begin{remark}
	Since Conjecture \ref{conj:Liu} concerns the range $a>30$, it remains only to
	check finitely many values below the uniform bounds obtained above.
	These remaining values are listed in Table~\ref{table:small}.
\end{remark}

\begin{table}[H]\label{table:small}
	\centering
	\caption{Verification of Conjecture \ref{conj:Liu} for the remaining values $a>30$}
	\label{tab:small-a-frobenius}
	\begin{tabular}{c|c|c|c}
		\hline
		$a\bmod{8}$ & $a$ & $R_a$ & $F(S(A(a))) = 3a+R_a$ \\ \hline
		$\multirow{5}{*}{1}$ & 33 & 23 & 122 \\
		& 41 & 28 & 151 \\
		& 49 & 47 & 194 \\
		& 57 & 39 & 210 \\
		& 65 & 55 & 250 \\ \hline
		
		$\multirow{2}{*}{2}$ & 34 & 28 & 130 \\
		& 42 & 28 & 154 \\ \hline
		
		$\multirow{2}{*}{3}$ & 35 & 31 & 136 \\
		& 43 & 28 & 157 \\ \hline
		
		$\multirow{5}{*}{5}$ & 37 & 23 & 134 \\
		& 45 & 39 & 174 \\
		& 53 & 39 & 198 \\
		& 61 & 31 & 214 \\
		& 69 & 63 & 270 \\ \hline
		
		$\multirow{2}{*}{6}$ & 38 & 31 & 145 \\
		& 46 & 31 & 169 \\ \hline
		
		$\multirow{3}{*}{7}$ & 31 & 28 & 121 \\
		& 39 & 31 & 148 \\
		& 47 & 39 & 180 \\ \hline
	\end{tabular}
\end{table}


\begin{thebibliography}{999}
	
	\bibitem{Einstein2007}
	D. Einstein, D. Lichtblau, A. Strzebonski, and S. Wagon, Frobenius numbers by lattice point enumeration, 
	\newblock {\it Integers} 
	\newblock \textbf{7}(1) (2007), A15.
	
	\bibitem{Hardy2008}[10.1093/oso/9780199219858.001.0001]
	\newblock G. H. Hardy, and E. M. Wright, {\em An Introduction to the Theory of Numbers}, 
	\newblock 6 Eds., Oxford: Oxford University Press, 2008. 
	
	\bibitem{Liu2026}[10.1007/s00026-026-00811-3]
	F. Liu, and G. Xin, On Frobenius Numbers of Shifted Power Sequences, 
	\newblock {\it Annals of Combinatorics} 
	\newblock 2026, 1--22. 
	
	\bibitem{Rosales2015}[10.1016/j.jnt.2015.03.006]
	J. C. Rosales, M. B. Branco, and D. Torr\~{a}o, {The Frobenius problem for Thabit numerical semigroups,} 
	\newblock {\it J. Number Theory} 
	\newblock \textbf{155} (2015), 85--99. 
	
	\bibitem{Rosales2009}[10.1007/978-1-4419-0160-6]
	\newblock J. C. Rosales, and P. A. Garc\'{i}a-S\'{a}nchez, {\em Numerical Semigroups}, 
	\newblock 1 Eds., New York: Springer Science \& Business Media, 2009. 
	
	
\end{thebibliography}
\end{document}